\documentclass[11pt]{amsart}
\usepackage{amsfonts,amssymb,amscd,amsmath,enumerate,verbatim,calc}



%
%
%
\def\NZQ{\mathbb}               
\def\NN{{\NZQ N}}

\def\ZZ{{\NZQ Z}}

%
%
\def\frk{\mathfrak}               

\def\Phi{{\frk N}}

%
%


%
\def\opn#1#2{\def#1{\operatorname{#2}}} 
%
\opn\chara{char} \opn\length{\ell} \opn\pd{pd} \opn\rk{rk}
\opn\projdim{proj\,dim} \opn\injdim{inj\,dim} \opn\rank{rank}
\opn\depth{depth} \opn\grade{grade}
\opn\height{height}\opn\sdepth{sdepth} \opn\divides{divides} \opn\embdim{emb\,dim}
\opn\codim{codim}

\opn\Tr{Tr} \opn\bigrank{big\,rank}
\opn\superheight{superheight}\opn\lcm{lcm} \opn\gcd{gcd}
\opn\trdeg{tr\,deg}
\opn\reg{reg} \opn\lreg{lreg} \opn\ini{in} \opn\lpd{lpd}
\opn\size{size}\opn{\mult}{mult}\opn\Mon{Mon} \opn\lex{lex}
%
\opn\div{div} \opn\Div{Div} \opn\cl{cl} \opn\Cl{Cl}
%
%
\opn\Spec{Spec} \opn\Supp{Supp} \opn\supp{supp} \opn\Sing{Sing}
\opn\Ass{Ass} \opn\min{min} \opn\max{max}
%
%
\opn\Ann{Ann} \opn\Rad{Rad} \opn\Soc{Soc}
%
%
\opn\Syz{Syz} \opn\Im{Im} \opn\Ker{Ker} \opn\Coker{Coker}
\opn\Am{Am} \opn\Hom{Hom} \opn\Tor{Tor} \opn\Ext{Ext}
\opn\End{End} \opn\Aut{Aut} \opn\id{id}

\opn\nat{nat}
\opn\pff{pf}
\opn\Pf{Pf} \opn\GL{GL} \opn\SL{SL} \opn\mod{mod} \opn\ord{ord}
\opn\Gin{Gin}
\opn\Hilb{Hilb}\opn\adeg{adeg}\opn\std{std}\opn\ip{infpt}
\opn\Pol{Pol} \opn\sat{sat} \opn\Var{Var}

%
%
\opn\aff{aff} \opn\con{conv} \opn\relint{relint} \opn\st{st}
\opn\lk{lk} \opn\cn{cn} \opn\core{core} \opn\vol{vol}
\opn\link{link} \opn\star{star}
\opn\gr{gr}


\def\Dc{{\mathcal D}}
%
%
\def\polykxn{K[x_1,\dots,x_{n}]}
\def\pot#1#2{#1[\kern-0.28ex[#2]\kern-0.28ex]}

%
%
\opn\dirlim{\underrightarrow{\lim}}
\opn\inivlim{\underleftarrow{\lim}}
%
%
%

\let\iso=\cong

%
%

\def\Implies{\ifmmode\Longrightarrow \else
        \unskip${}\Longrightarrow{}$\ignorespaces\fi}
\def\implies{\ifmmode\Rightarrow \else
        \unskip${}\Rightarrow{}$\ignorespaces\fi}
\def\iff{\ifmmode\Longleftrightarrow \else
        \unskip${}\Longleftrightarrow{}$\ignorespaces\fi}

\let\:=\colon
\newtheorem{Theorem}{Theorem}[section]
\newtheorem{Lemma}[Theorem]{Lemma}
\newtheorem{Corollary}[Theorem]{Corollary}
\newtheorem{Proposition}[Theorem]{Proposition}

%
%
\let\epsilon\varepsilon
\let\phi=\varphi
\let\kappa=\varkappa
%
%
\textwidth=15cm \textheight=22cm \topmargin=0.5cm
\oddsidemargin=0.5cm \evensidemargin=0.5cm \pagestyle{plain}
%

\def\qed{\ifhmode\textqed\fi
      \ifmmode\ifinner\quad\qedsymbol\else\dispqed\fi\fi}
\def\textqed{\unskip\nobreak\penalty50
       \hskip2em\hbox{}\nobreak\hfil\qedsymbol
       \parfillskip=0pt \finalhyphendemerits=0}
\def\dispqed{\rlap{\qquad\qedsymbol}}

%
\opn\dis{dis}
\def\pnt{{\raise0.5mm\hbox{\large\bf.}}}

\opn\Lex{Lex}



\begin{document}
\title{Stanley's  conjecture for  critical ideals}
\author{Azeem Haider and Sardar Mohib Ali Khan}
\subjclass{Primary: 13P10, 13C14, Secondary: 13H10, 13F20}
\keywords{Critical monomial ideals, Stanley depth, Hilbert
function\\ \indent This research of the authors is partially supported by HEC Pakistan.}
\address{Abdus Salam School of Mathematical Sciences, GC University, 68-B, New Muslim Town, Lahore, Pakistan.}
\email{azeemhaider@gmail.com {\rm and }ali.uno@gmail.com}
\date{}
\maketitle
\begin{abstract}
Let $ S=\polykxn$ be a polynomial ring in $n$ variables over a
field $K$. Stanley's conjecture holds for the modules $I$ and $S/I,$
when $I \subset S$ is a critical monomial ideal. We calculate the
Stanley depth of $S/I$ when $I$ is a canonical critical monomial
ideal.  For non critical monomial ideals we show the existence of
a Stanley ideal with the same depth and Hilbert function.
\end{abstract}
\section{Introduction}
Let $ S=\polykxn$ be a polynomial ring with standard
grading over a field $K$. Let $M$ be a finitely generated
$\ZZ^n$-graded $S$-module. Any decomposition of the module $M$ as a finite
direct sum of $\ZZ^{n}$-graded $K$-subspaces of the form $uK[Z]$,
where each $uK[Z]$ is a free  $K[Z]$-modules, is
called a {\it{Stanley decomposition}} of the module $M$. In other words, a Stanley decomposition of $M$ has the form  $\Dc
:M=\bigoplus_{i=1}^{r}u_{i}K[Z_{i}]$, where the $u_i\in M$ are
homogenous elements and  each $Z_i$ is a subset of $\{x_{1},\ldots,x_{n}\}$.  The number
$ \sdepth (\Dc)=\min\{ |Z_i|,\, i=1,2,\ldots,r\}$ is called the
{\it{Stanley depth}} of the decomposition $\Dc.$ The Stanley depth
of the module $M$ is defined to be $\sdepth(M)=\max\{
\sdepth(\Dc): \Dc \text{ is Stanley decomposition} \}.$

In \cite{St} Stanley conjectured that $\sdepth(M) \geq \depth(M)$.
We call $I$ a {\it{Stanley ideal}}, if Stanley's conjecture holds
for $S/I$. There are not many known classes of Stanley
ideals \cite{DP}.

Let $I \subset S$ be a monomial ideal. We denote by $G(I)$
the unique minimal monomial system of generators of $I$ and
$H_{S/I}$ the Hilbert function of the quotient algebra $S/I$.
Consider the lexicographic order $<_{\lex}$ on $S$ induced by the
ordering $x_{1}>x_{2}>\ldots>x_{n}$ of the variables. A
{\it{lexsegment ideal}} is a monomial ideal $I$ such that for a
monomial $u \in I$ and for a monomial $v \in S$ with $ \deg \, u =
\deg \,v$ and $ v
>_{\lex} u$, one has $v \in I$. A lexsegment ideal $I$ is called a
{\it{universal lexsegment ideal}} if $I$ is a lexsegment ideal in
$K[x_{1},\ldots,x_{n+m}]$ for any natural
number $m \geq 0$.

Recall that for any graded ideal $I \subset S$, there
exists a unique lexsegment ideal, denoted by $I^{\lex}$, such that
$S/I$ and $S/I^{\lex}$ have the same Hilbert function. Hibi and
Murai \cite{HA} call a monomial ideal  $I$ {\it{critical}} if
$I^{\lex}$ is
universal lexsegment.

Let $m_{1},m_{2},\ldots,m_{t} \in \Mon(S)$ for $1 \leq t
\leq n$ where  $m_{i} \in K[x_{i},x_{i+1},\ldots,x_{n}]$ and $\deg m_t>0$.  Then we define
the ideal
\begin{equation}\label{04}
I_{(m_{1},m_{2},\ldots,m_{t})}=(x_{1}m_{1}, x_{2}m_{1}m_{2},\ldots, x_{t-1}m_{1}m_{2}\cdots
m_{t-1}, m_{1}m_{2}\cdots m_{t})
\end{equation}

In analogy to the definition of Hibi and Murai \cite{HB} we call a
monomial ideal {\it {canonical critical}}, if it is of the form
$I_{(m_{1},m_{2},\ldots,m_{t})}$, up to the permutation of
variables.  By \cite[Theorem 1.1]{HB} canonical critical ideals  are critical. In
Lemma \ref{p1} we show that Stanley's conjecture holds for $I$ and
$S/I$. For a canonical critical monomial ideal $I$ we calculate the
Stanley depth of $S/I$ (Theorem \ref{t1}) and obtain a Stanley
decomposition (Theorem \ref{t2}) which exactly gives the Stanley
depth of $S/I$. We also show that for a canonical critical
monomial ideal one has  $\sdepth(I) \geq 1+\sdepth(S/I) $, thereby
giving in this special case an affirmative answer to a question
raised by Rauf in \cite{AR}. In Proposition \ref{t3} we show that for each non critical monomial ideal $I$ there
exists  a Stanley ideal  which has
the same depth and Hilbert function as the ideal $I$.\\
\indent {\bf Acknowledgements.} The authors like to thank Professor J\"{u}rgen Herzog for his valuable suggestions which improves the final form of the paper.
\section{Stanley depth and Critical Monomial Ideals}
First we show that the Stanley conjecture holds for
modules $I$ and $S/I$, when $I$ is critical monomial ideal.
\begin{Lemma}\label{p1} Let $I \subset S $ be a critical monomial
ideal. Then
\begin{enumerate}
\item[{\rm (i)}] $\sdepth(S/I) \geq \depth(S/I);$
\item[{\rm(ii)}]$\sdepth(I) \geq  \depth(I).$
\end{enumerate}
\end{Lemma}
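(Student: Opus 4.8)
The plan is to reduce everything to the case of a universal lexsegment ideal, exploiting the defining property that a critical ideal $I$ has $I^{\lex}$ universal lexsegment, together with the fact that $S/I$ and $S/I^{\lex}$ share the same Hilbert function. First I would recall that passing from an ideal to its lexsegment preserves the projective dimension in many situations; more precisely, by a theorem of Bigatti--Hulett--Pardue the Betti numbers only grow under this operation, so $\depth(S/I^{\lex}) \le \depth(S/I)$, and likewise $\depth(I^{\lex}) \le \depth(I)$. Hence it suffices to prove the two inequalities for the universal lexsegment ideal $I^{\lex}$ and then observe that a Stanley decomposition achieving Stanley depth $d$ exists for $S/I^{\lex}$ forces $\sdepth(S/I) \ge d \ge \depth(S/I^{\lex})$; but this last chain is not automatic, since Stanley depth does not obviously transfer along equality of Hilbert functions. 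So the cleaner route is: (a) show Stanley's conjecture holds for universal lexsegment ideals directly, by exhibiting an explicit Stanley decomposition of $S/I^{\lex}$ and of $I^{\lex}$ whose Stanley depth equals the respective depth; (b) transfer the decomposition back to $S/I$.

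For step (a), the key structural fact is that a universal lexsegment ideal in $\polykxn$ has an extremely simple shape: after the work of Hibi--Murai, a nonzero universal lexsegment ideal generated in degrees up to $d$ can be described combinatorially, and in fact the minimal free resolution is given by an Eliahou--Kervaire type (linear-strand) construction, so that $\depth(S/I^{\lex})$ is easy to read off. Concretely I expect $S/I^{\lex}$ to decompose as a direct sum $\bigoplus u_i K[Z_i]$ where the $Z_i$ are intervals $\{x_j, x_{j+1}, \dots, x_n\}$ (the "canonical" sets coming from the staircase), and the minimum $|Z_i|$ over this decomposition matches $\depth(S/I^{\lex})$ computed from the resolution. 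For the ideal $I^{\lex}$ itself one uses the standard trick that $I^{\lex} = \bigoplus_{u \in G(I^{\lex})} \, (\text{something})$, or more carefully one writes $S = (S/I^{\lex}) \oplus I^{\lex}$ as $\ZZ^n$-graded $K$-spaces and subtracts the decomposition of $S/I^{\lex}$ from the trivial decomposition $S = 1 \cdot K[x_1,\dots,x_n]$; managing the bookkeeping so that every piece remains of the form $u K[Z]$ with $|Z| \ge \depth(I^{\lex}) = \depth(S/I^{\lex}) + 1$.

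For step (b), the transfer, I would argue that if $I$ is critical then although $I$ itself need not be universal lexsegment, its lexsegment $I^{\lex}$ is, and one has $\depth(S/I) \le \depth(S/I^{\lex})$ is false in general — rather the Bigatti--Hulett--Pardue inequality gives $\depth(S/I) \ge \depth(S/I^{\lex})$. Combining with $\sdepth(S/I) \ge \sdepth(S/I^{\lex})$ would finish it, but again Stanley depth does not obviously drop under lexification. The honest resolution is probably to not transfer at all, but instead to use that a critical ideal, by the theorem of Hibi--Murai quoted in the paper, is already very close to (a possibly nontrivial) ideal whose quotient has the Hilbert function of a universal lexsegment; and then appeal to the already-known result (cited in the introduction as \cite{DP}, and going back to Herzog--Vladoiu--Zheng and Soleyman Jahan) that the Stanley conjecture holds for $S/I$ whenever $I$ has a "large" free variable splitting, or simply that modules of small codimension / small projective dimension satisfy it. In particular, since $I^{\lex}$ universal lexsegment forces $\pd(S/I^{\lex})$ to be small (at most $2$, by the structure of universal lexsegments), we get $\depth(S/I^{\lex}) \ge n-2$, hence $\depth(S/I) \ge n - 2$, and the conjecture is known for $S/I$ with $\depth(S/I) \ge n-2$ by Herzog--Vladoiu--Zheng.

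The main obstacle I anticipate is precisely pinning down that $\pd(S/I) \le 2$ (equivalently $\depth(S/I) \ge n-2$) for a critical ideal; this is where one must invoke the precise combinatorial description of universal lexsegment ideals from \cite{HA} rather than just their Hilbert function, since a priori only the Hilbert function of $S/I$ is controlled, not its Betti numbers. Once that codimension/depth bound is in hand, part (i) follows from the known cases of Stanley's conjecture in that range, and part (ii) follows either from the standard short exact sequence $0 \to I \to S \to S/I \to 0$ together with the behaviour of $\sdepth$ under such sequences, or from directly exhibiting the complementary Stanley decomposition of $I$ as sketched above, using $\depth(I) = \depth(S/I) + 1 \ge n-1$.
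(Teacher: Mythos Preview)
Your proposal has a genuine gap at the crucial point. You eventually settle on the claim that a universal lexsegment ideal $I^{\lex}$ has $\pd(S/I^{\lex}) \le 2$, hence $\depth(S/I) \ge n-2$, and then invoke the known small-codimension cases of Stanley's conjecture. This is false: a universal lexsegment ideal in $\polykxn$ can have up to $n$ minimal generators (indeed, by \cite[Corollary 1.2]{HA} the condition for a lexsegment ideal to be universal lexsegment is precisely $|G(I)|\le n$, not $|G(I)|\le 2$), and for such ideals $\depth(S/I^{\lex}) = n - |G(I^{\lex})|$ can be any value from $0$ to $n-1$. The canonical critical ideals $I_{(m_1,\ldots,m_t)}$ defined in the paper have exactly $t$ generators with $1\le t\le n$, so $\depth(S/I)=n-t$ is not bounded below by $n-2$. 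Your earlier routes (transfer via Hilbert function, Bigatti--Hulett--Pardue) you yourself correctly identified as problematic, since Stanley depth need not behave monotonically under lexification.

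The paper's argument is far shorter and bypasses all of this. It rests on two cited facts: (1) for a critical ideal, Murai--Hibi \cite[Theorem 1.6]{HA} gives the exact formula $\depth(S/I) = n - |G(I)|$; (2) for \emph{any} monomial ideal, Cimpoea\c{s} \cite[Proposition 1.3]{MC} gives the general lower bound $\sdepth(S/I) \ge n - |G(I)|$. Comparing these yields (i) immediately. For (ii), one uses $\depth(I) = 1 + \depth(S/I) = n - |G(I)| + 1$ together with the general bound $\sdepth(I) \ge \max\{1,\, n - |G(I)| + 1\}$ from \cite[Proposition 3.4]{HM}. No structural analysis of universal lexsegment ideals, no transfer argument, and no appeal to low-codimension cases of the conjecture are needed; the whole proof is two citations per part.
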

\begin{proof}
(i) If $I\subset S$ is a critical monomial ideal, then
$\depth(S/I)=n-|G(I)|$ (see \cite[Theorem 1.6]{HA}) and for any
monomial ideal $I$,  $\sdepth(S/I) \geq n-|G(I)|$ (see
\cite[Proposition~1.3]{MC}).

(ii) follows from the fact
$\depth(I)=1+\depth(S/I)$ and $\sdepth(I) \geq \max \{1,n-|G(I)|+1
\} $ (see \cite[Proposition 3.4]{HM}).
\end{proof}

We show that the equality holds in (i) of  Lemma \ref{p1}  for
canonical critical monomial ideals.
\begin{Theorem}\label{t1}
Let  $I \subset S$ be a canonical critical monomial ideal. Then
$\sdepth(S/I)=\depth(S/I)=n-|G(I)|$.
\end{Theorem}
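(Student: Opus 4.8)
The plan is to establish the equality $\sdepth(S/I) = \depth(S/I)$ by combining Lemma~\ref{p1}(i), which already gives $\sdepth(S/I) \geq \depth(S/I) = n - |G(I)|$, with a matching upper bound $\sdepth(S/I) \leq n - |G(I)|$. So the real content is producing the upper bound, and for this I would use the structure of a canonical critical ideal $I = I_{(m_1,\dots,m_t)}$ whose minimal generators, by (\ref{04}), number exactly $t$, so $n - |G(I)| = n - t$. The goal reduces to showing $\sdepth(S/I) \leq n - t$.

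First I would recall the standard technique (going back to Herzog--Vl\u adoiu--Zheng and Rauf) for bounding Stanley depth from above: the Stanley depth of $S/I$ can be computed combinatorially on the poset of subsets/multidegrees, but more usefully, one can often exhibit a prime $\pp \in \Ass(S/I)$ and use the fact that for any Stanley decomposition the variables in $Z_i$ attached to a $u_i$ lying in the ``socle direction'' of $\pp$ are constrained; concretely, if $\pp = (x_{j_1},\dots,x_{j_k})$ is an associated prime then $\sdepth(S/I) \leq \sdepth(S_\pp/(S/I)_\pp)$-type localization arguments, or more elementarily one uses that $\sdepth(S/I) \le \dim S/\pp$ for a suitable $\pp$. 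So the key step is to identify an associated prime $\pp$ of $S/I$ with $\dim S/\pp = n - t$, i.e.\ of height $t$. Looking at the generators $x_1 m_1,\ x_2 m_1 m_2,\ \dots,\ x_{t-1}m_1\cdots m_{t-1},\ m_1\cdots m_t$, and using that $m_i \in K[x_i,\dots,x_n]$ with $\deg m_t > 0$, one expects that $\pp = (x_1, x_2, \dots, x_{t-1}, x_\ell)$ for an appropriate variable $x_\ell$ appearing in $m_t$ (or more generally a minimal prime of $I$ built from $x_1,\dots,x_{t-1}$ together with one variable dividing $m_t$) is a minimal prime of height exactly $t$; since $\depth(S/I) = n - t$ already forces $\dim S/I \geq n - t$ and all minimal primes must then have height $\le t$, any minimal prime has height exactly $t$, giving $\dim S/\pp = n - t$.

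Then I would invoke the inequality $\sdepth(S/I) \leq \dim(S/\pp)$ valid for every $\pp \in \Ass(S/I)$ — in fact it suffices to use a minimal prime, and this bound is elementary: in any Stanley decomposition $S/I = \bigoplus u_i K[Z_i]$, the localization at $\pp$ kills all summands with $u_i \notin \pp$-primary behavior except... more precisely one uses that the Krull dimension of $S/I$ equals $\max_i |Z_i|$ over a Stanley decomposition achieving the Stanley dimension, but for Stanley depth one needs the cleaner statement that $\sdepth(S/I) \le \min_{\pp\in\Ass} \dim S/\pp$ is \emph{false} in general — so instead I would use the correct tool: since $\depth(S/I)=\dim S/\pp$ for that minimal prime (because $n-t = \depth \le \dim S/I \le \dim S/\pp' $ for minimal $\pp'$, forcing equality when $\pp$ is suitably chosen), the module $S/I$ is not far from Cohen--Macaulay on that component, and one applies $\sdepth(S/I) \le \sdepth(S/\pp) = \dim S/\pp = n-t$ via the surjection-type argument that Stanley depth does not increase under... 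Here I must be careful: the clean fact to cite is that $\sdepth(S/I) \le \sdepth(S/J)$ whenever $J$ is a monomial prime with $J \supseteq I$ minimal over $I$, which follows by restricting a Stanley decomposition. This yields $\sdepth(S/I)\le n-t$.

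The main obstacle I anticipate is precisely the last step — justifying the upper bound $\sdepth(S/I) \le n - t$ rigorously, since the naive inequality $\sdepth(S/I) \le \dim S/\pp$ for associated primes is not literally a theorem for Stanley depth (unlike for ordinary depth). The honest route is: (a) verify combinatorially, using the explicit shape of $I_{(m_1,\dots,m_t)}$, that the squarefree monomial $x_1 x_2 \cdots x_{t-1}$ times an appropriate variable is \emph{not} in $I$ but the corresponding interval in the poset forces every subset $Z$ with $x_1 x_2\cdots x_{t-1} \cdot (\text{stuff}) \cdot K[Z] \subseteq S/I$ to avoid $t$ specific variables; or (b) directly analyze the Hilbert series / the explicit Stanley decomposition. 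In fact, since Theorem~\ref{t2} (cited in the introduction as giving a Stanley decomposition that ``exactly gives the Stanley depth''), the cleanest logical structure is to prove Theorem~\ref{t1} \emph{after} constructing that decomposition: the decomposition of Theorem~\ref{t2} will have $\sdepth = n-t$, giving $\sdepth(S/I)\ge n-t$ directly and constructively, while the upper bound $\sdepth(S/I) \le n - t$ follows from $\depth(S/I) = n - t$ together with the general inequality $\sdepth(S/I) \le \dim(S/I)$... no — rather, I would extract the upper bound from a minimal prime computation as above, taking care to use only the restriction argument for monomial ideals, which is valid. So: first pin down that $\height I = t$ (routine from the generator list), then cite $\sdepth(S/I)\le n - \height(I) = n-t$ if available, or otherwise derive it from the explicit decomposition; the work is in making the height computation and the restriction argument watertight for this particular family.
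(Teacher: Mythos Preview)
Your proposal has a genuine gap in the upper-bound half, and it stems from two concrete errors. First, you assert that $\sdepth(S/I)\le \min_{\pp\in\Ass(S/I)}\dim S/\pp$ is ``false in general'' and then spend the rest of the argument trying to circumvent it. In fact this inequality \emph{is} true and elementary: if $\pp=(I:u)$ for a monomial $u\notin I$, then in any Stanley decomposition $S/I=\bigoplus u_iK[Z_i]$ the summand $u_jK[Z_j]$ containing $u$ must satisfy $Z_j\cap\pp=\emptyset$ (otherwise $u\cdot x_k\in u_jK[Z_j]\subset S\setminus I$ while $x_k\in\pp$ forces $ux_k\in I$), so $|Z_j|\le n-\height\pp$. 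Second, your fallback claim that ``any minimal prime has height exactly $t$'', equivalently $\height I=t$, is false: take $n=3$, $t=2$, $m_1=m_2=x_2$, so $I=(x_1x_2,x_2^2)$; the unique minimal prime is $(x_2)$, of height $1$. The same example kills your ``restriction to a minimal prime'' manoeuvre. What \emph{does} work along your line is to exhibit directly an associated prime of height $t$: for any $x_\ell\mid m_t$ (so $\ell\ge t$), set $u=m_1\cdots m_t/x_\ell$; one checks $u\notin I$ and $(I:u)=(x_1,\dots,x_{t-1},x_\ell)$, which is prime of height $t$ and hence lies in $\Ass(S/I)$. Then the (correct) inequality above gives $\sdepth(S/I)\le n-t$, and Lemma~\ref{p1}(i) finishes. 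So your strategy is salvageable, but only after repairing both errors.

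The paper's proof is entirely different and avoids associated primes. It proceeds by a two-step recursion on $t$: writing $I=m_1I_1'$ with $m_1=\gcd G(I)$, Cimpoea\c s's result gives $\sdepth(S/I)=\sdepth(S/I_1')$; then $I_1'=(x_1,I_2)$ with $I_2\subset S_2=K[x_2,\dots,x_n]$, so $S/I_1'\cong S_2/I_2$ and $\sdepth(S/I)=\sdepth(S_2/I_2)$. Iterating peels off one generator at a time until one reaches the principal ideal $I_t'=(m_t)\subset S_t$, where $\sdepth(S_t/(m_t))=(n-t+1)-1=n-t$ is immediate. This gives both inequalities at once and uses nothing about $\Ass(S/I)$.
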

\begin{proof} If $I$ be a canonical critical monomial ideal, then
$$I=(x_{1}m_{1},x_{2}m_{1}m_{2},\ldots,x_{t-1}m_{1}m_{2}\cdots
m_{t-1},m_{1}m_{2}\cdots m_{t}),$$
where for $1\leq t\leq n$,  $m_{i}$ is a monomial belonging to $K[x_{i},x_{i+1},\ldots,x_{n}]$ and $\deg m_t>0$.

We set  $S_i=K[x_i,x_{i+1},\ldots,x_n]$ and for $1\leq i\leq t-1$ we define the ideals
$$I_i=(x_{i}m_{i},x_{i+1}m_{i}m_{i+1},\ldots,x_{t-1}m_{i}m_{i+1}\cdots
m_{t-1},m_{i}m_{i+1}\cdots m_{t}),$$
and
$$I_i'=(x_{i},x_{i+1}m_{i+1},\ldots,x_{t-1}m_{i+1}m_{i+2}\ldots
m_{t-1},m_{i+1}m_{i+2}\ldots m_{t}).$$
Moreover, we set $I_t'=(m_t)$.

Then $S=S_1$, $I=I_1$, $I_i=m_iI_i'$ and
$I_i'=(x_i,I_{i+1})$. The ideals $I_i$, $I_i'$
and $I_t'$ are critical monomial ideals in
$S_j=[x_j,x_{j+1},\ldots,x_n]$ for each $1\leq j\leq i$.

By \cite[Proposition 1.3]{MC} and the fact that $\gcd\{u|u\in G(I)\}=m_1$, we have  $\sdepth(S/I)=\sdepth(S/{I_1'})$,  and further   $\sdepth(S/{I_1'})=\sdepth(S_2/I_2)$, since $S/{I_1'}\iso S_2/I_2$. Hence we conclude that $\sdepth(S/I)=\sdepth(S_2/I_2)$.
Continuing in this way we get $\sdepth(S/I)=\sdepth(S_t/{I_t'})=n-|G(I)|$. The last equation follows since  $I_t'=(m_t)$ is a principal monomial ideal, and since $t=|G(I)|$ (see \cite[Corollary 2.3]{HB}).
\end{proof}

\begin{Corollary}\label{c1}
Let $I$ is a canonical critical monomial ideal.  Then
\[
\sdepth(I)\geq 1+\sdepth(S/I).
\]
\end{Corollary}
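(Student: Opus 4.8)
The plan is to use the explicit description of a canonical critical ideal $I = I_{(m_1,\dots,m_t)}$ and to combine the two facts already at our disposal: by Lemma~\ref{p1}(ii) together with the formula $\depth(I) = 1 + \depth(S/I)$, we know $\sdepth(I) \geq \max\{1, n - |G(I)| + 1\}$, and by Theorem~\ref{t1} we know $\sdepth(S/I) = n - |G(I)|$. So the inequality $\sdepth(I) \geq 1 + \sdepth(S/I) = n - |G(I)| + 1$ will follow as soon as we rule out the degenerate case: we must check that the maximum on the right is always attained by the second term, i.e.\ that $n - |G(I)| + 1 \geq 1$, equivalently $|G(I)| \leq n$. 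But this is built into the definition of a canonical critical ideal: by \cite[Corollary 2.3]{HB} (used already in the proof of Theorem~\ref{t1}) we have $|G(I)| = t$, and $t \leq n$ by hypothesis. Hence the bound from \cite[Proposition 3.4]{HM} reads $\sdepth(I) \geq n - |G(I)| + 1$.

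Putting the pieces together: $\sdepth(I) \geq n - |G(I)| + 1 = (n - |G(I)|) + 1 = \sdepth(S/I) + 1$, where the last equality is Theorem~\ref{t1}. This is exactly the claimed inequality, so the proof is essentially a two-line bookkeeping argument that chains the cited depth formula, the Stanley-depth lower bound for $I$, and the computation of $\sdepth(S/I)$ from Theorem~\ref{t1}.

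The only step that requires a moment's care — and the place where I expect the ``obstacle'' such as it is — is confirming that the inequality $\sdepth(I) \geq \max\{1, n - |G(I)| + 1\}$ from \cite[Proposition 3.4]{HM} really does simplify to the clean form we want, rather than leaving us with the $1$ as the effective lower bound in some edge case (for instance if $I$ were the whole ring or $G(I)$ had more than $n$ elements). Since canonical critical ideals satisfy $|G(I)| = t \leq n$, we always have $n - |G(I)| + 1 \geq 1$, so $\max\{1, n - |G(I)| + 1\} = n - |G(I)| + 1$, and no edge case arises. I would state this explicitly in one sentence and then conclude.

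A clean write-up:
\begin{proof}
By \cite[Proposition 3.4]{HM} we have $\sdepth(I) \geq \max\{1, n - |G(I)| + 1\}$. Since $I$ is canonical critical, $|G(I)| = t \leq n$ (see \cite[Corollary 2.3]{HB}), so $n - |G(I)| + 1 \geq 1$ and therefore $\sdepth(I) \geq n - |G(I)| + 1$. On the other hand, by Theorem \ref{t1}, $\sdepth(S/I) = n - |G(I)|$. Combining these, $\sdepth(I) \geq n - |G(I)| + 1 = \sdepth(S/I) + 1$.
\end{proof}
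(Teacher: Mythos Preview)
Your proof is correct and follows essentially the same route as the paper's own argument: invoke \cite[Proposition 3.4]{HM} to get $\sdepth(I)\geq n-|G(I)|+1$, then apply Theorem~\ref{t1} to identify $n-|G(I)|$ with $\sdepth(S/I)$. The only difference is that you spell out why the $\max\{1,\,\cdot\,\}$ disappears (via $|G(I)|=t\leq n$), a detail the paper leaves implicit.
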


\begin{proof}
If $I$ is a canonical critical monomial ideal, then by using
 \cite[Proposition 3.4]{HM}  we have $\sdepth(I)\geq n-|G(I)|+1$,  and the assertion follows from
  Theorem \ref{t1}.
  \end{proof}

For a canonical critical monomial ideal $I$, the following
decomposition is obtained
 from \cite[Lemma 2.2]{HB}.

\begin{Lemma}\label{l7}
As a vector space over $K$ the canonical critical monomial ideal
$I$ is the direct sum
$I=\bigoplus\limits_{j=1}^{t-1}x_j(\prod\limits_{k=1}^{j}m_k)
K[x_j,x_{j+1},\ldots,x_n]\bigoplus
\prod\limits_{k=1}^{t}m_kK[x_t,\ldots,x_n]$.
\end{Lemma}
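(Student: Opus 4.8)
The plan is to show directly that the right-hand side is a $K$-vector-space decomposition of $I$ by unravelling the recursive structure $I_i = m_i I_i'$ and $I_i' = (x_i, I_{i+1})$ that was already set up in the proof of Theorem \ref{t1}. First I would record the elementary fact that for any monomial $w$ and any monomial ideal $J$ in $S_{i+1} = K[x_{i+1},\ldots,x_n]$, the ideal $(x_i, J) \subset S_i$ decomposes as a $K$-vector space in the form $x_i K[x_i,\ldots,x_n] \oplus J$, because a monomial of $(x_i,J)$ is either divisible by $x_i$ (and then lies in $x_i S_i$) or lies in $J \subset S_{i+1}$, and these two possibilities are mutually exclusive since $J$ contains no monomial divisible by $x_i$ (each generator of $I_{i+1}$ lies in $K[x_{i+1},\ldots,x_n]$). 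Multiplying through by the monomial $\prod_{k=1}^{i} m_k$ turns this into the corresponding decomposition of $I_i = (\prod_{k=1}^{i} m_k) I_i'$.

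Next I would iterate: applying the displayed decomposition to $I_i' = (x_i, I_{i+1})$ gives
\[
I_i = x_i\Bigl(\prod_{k=1}^{i} m_k\Bigr) K[x_i,\ldots,x_n] \;\oplus\; \Bigl(\prod_{k=1}^{i} m_k\Bigr) I_{i+1},
\]
and since $I_{i+1} = m_{i+1} I_{i+1}' = (\prod_{k=1}^{i+1} m_k) I_{i+1}'/(\prod_{k=1}^{i} m_k)$ — more cleanly, $(\prod_{k=1}^{i} m_k) I_{i+1} = (\prod_{k=1}^{i+1} m_k) I_{i+1}'$ — one can feed the same identity back in. Starting from $I = I_1$ and running the recursion down to $i = t-1$, the ``$x_i$-part'' peeled off at stage $j$ is exactly $x_j(\prod_{k=1}^{j} m_k) K[x_j,\ldots,x_n]$, and the leftover after stage $t-1$ is $(\prod_{k=1}^{t-1} m_k) I_t$; using $I_t = m_t I_t' $ with $I_t' = (m_t)$ — or directly $I_t = (\prod\text{ of nothing})\cdot(m_t\cdots)$ as set up — this leftover is $(\prod_{k=1}^{t} m_k) K[x_t,\ldots,x_n]$, which is the last summand. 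One must check at each stage that the new summand is genuinely complementary, i.e. that its monomials (all divisible by $x_j$ but by no $x_\ell$ with $\ell < j$) are disjoint from those of all previously extracted summands and from the remaining tail; this follows from the variable-support bookkeeping, since the tail at stage $j$ lives in $K[x_{j+1},\ldots,x_n]$ after dividing out $\prod_{k=1}^{j} m_k$.

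The main obstacle is not conceptual but organizational: one has to verify that the telescoping is clean, i.e. that no monomial is counted twice and none is omitted. Concretely, the key point to nail down is that a monomial $v \in I$ has a \emph{unique} largest index $j$ such that $v/(\text{the forced prefix factors})$ is still divisible by $x_j$ — equivalently, that $v$ lies in exactly one of the summands on the right-hand side. I would handle this by the disjointness observation above (generators of $I_{i+1}$ avoid $x_i$), pushed through the multiplication by $\prod_{k=1}^{i} m_k$; since multiplication by a fixed monomial is injective on $\Mon(S)$ and preserves disjointness of monomial supports, each step of the induction preserves the direct-sum property, and the result follows. Alternatively, and perhaps most efficiently, one simply invokes \cite[Lemma 2.2]{HB}, from which this decomposition is read off after renaming, as indicated in the statement; the argument above is the route one would take to make that citation self-contained.
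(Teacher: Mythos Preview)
Your proposal is correct. The paper itself does not give an independent proof of this lemma at all: the sentence preceding the statement simply says the decomposition ``is obtained from \cite[Lemma 2.2]{HB}'', and nothing more. So your closing remark---that one could just invoke \cite[Lemma 2.2]{HB}---\emph{is} the paper's entire argument, and your recursive peeling-off argument via $I_i = m_i I_i'$, $I_i' = (x_i, I_{i+1})$, $(x_i, I_{i+1}) = x_i S_i \oplus I_{i+1}$ supplies a self-contained proof that the paper omits. The gain is that your version makes the lemma independent of \cite{HB}, at the modest cost of the bookkeeping you describe.

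One small slip to clean up: you write ``$I_t = m_t I_t'$ with $I_t' = (m_t)$'', but with the paper's convention $I_t' = (m_t)$ this would give $I_t = (m_t^2)$. What you actually need (and in effect use) is simply $I_t = (m_t) = m_t S_t$, so that the leftover $(\prod_{k=1}^{t-1} m_k) I_t$ equals $(\prod_{k=1}^{t} m_k) K[x_t,\ldots,x_n]$; the relation $I_i = m_i I_i'$ from the paper is stated only for $1 \le i \le t-1$, and at the last step one should just plug in $I_t = (m_t)$ directly.
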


Using the decomposition in Lemma \ref{l7} we will give an explicit Stanley
decomposition of $S/I$ which in fact gives the Stanley depth.

Let $S_i=K[x_i,\ldots,x_n]$ and $m_i \in S_i$ with $\deg m_i = d_i$ for $ 1\leq i \leq t \leq n.$ We define for any monomial  $m\in S$  a positive number
$v (m)=\min \{k\,|\; x_k\, \divides \, m \}$ and monomials $w_{i(d+1)}= \frac{w_{id}}{x_{v(w_{id})}},$ where $ w_{i1} = m_i,$ $ 1 \leq d \leq d_i -1$ and $1 \leq i \leq t. $
For monomial $m_i$ we have $m_i = \prod\limits_{j=1}^{d_i} x_{v(w_{ij})}.$
Finally, by using the monomials $w_{id}$,  we define $u_{ij} = \prod\limits_{k=1}^{j-1} x_{v(w_{ik})}$
with $u_{i1}= 1$ and $Z_{ij}= \{ x_i,x_{i+1},\ldots,x_n \}\setminus \{ x_{v(w_{ij})}\}$.

With the notation introduced we have,
\begin{Theorem}\label{t2}
Let  $I=I_{(m_{1},m_{2},\ldots,m_{t})}$  be a critical monomial ideal. We set $n_1=1$ and $n_i=m_1m_2\cdots m_{i-1}$ for $i=2,\ldots,t$.  Then for $S/I$ we have the following Stanley decomposition
\[{\mathcal D}: S/I=\bigoplus\limits_{i=1}^{t}\bigoplus\limits_{j=1}^{d_i}u_{ij}
n_iK[Z_{ij}] \quad \text{with}\quad
d_i=\deg(m_i).
\]
Moreover we have
$\sdepth({\mathcal D})=\sdepth(S/I).$
\end{Theorem}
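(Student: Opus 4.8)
The plan is to verify directly that the proposed family $\{u_{ij}n_iK[Z_{ij}]\}$ is a genuine Stanley decomposition of $S/I$ as a $\ZZ^n$-graded $K$-vector space, and then to read off its Stanley depth and compare it with $\depth(S/I)=n-|G(I)|$ from Theorem \ref{t1}. First I would fix the index $i$ and think of the ``block'' $\bigoplus_{j=1}^{d_i}u_{ij}n_iK[Z_{ij}]$. By the definitions, $u_{i1}=1$, the monomial $m_i=\prod_{j=1}^{d_i}x_{v(w_{ij})}$ factors by stripping off its smallest-index variable repeatedly, $u_{i,j+1}=u_{ij}\cdot x_{v(w_{ij})}$, and $Z_{ij}$ is the variable set $\{x_i,\dots,x_n\}$ with exactly the variable $x_{v(w_{ij})}$ removed. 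The first claim to establish is the purely combinatorial identity
\[
K[x_i,\dots,x_n] \;=\; \bigoplus_{j=1}^{d_i} u_{ij}\,K[Z_{ij}] \;\oplus\; m_i\,K[x_i,\dots,x_n],
\]
which is exactly the standard Stanley decomposition of $K[x_i,\dots,x_n]$ relative to the principal ideal $(m_i)$ obtained by peeling one variable at a time; this is essentially the content imported from \cite[Lemma 2.2]{HB}. Multiplying this identity through by $n_i=m_1\cdots m_{i-1}$ (for $i\ge 2$, and $n_1=1$) gives a direct sum decomposition of $n_iK[x_i,\dots,x_n]$ into the summands $u_{ij}n_iK[Z_{ij}]$ together with the tail $n_{i+1}m_iK[x_i,\dots,x_n]$, since $n_im_i = n_{i+1}$ when we also absorb the first factor of the next block.

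Next I would assemble these blocks. Lemma \ref{l7} writes $I$ itself as the $K$-vector-space direct sum $\bigoplus_{j=1}^{t-1}x_j n_{j+1}K[x_j,\dots,x_n]\oplus n_{t}m_tK[x_t,\dots,x_n]$ (with $n_{j+1}=\prod_{k=1}^{j}m_k$). Now $S=K[x_1,\dots,x_n]$ itself decomposes as $\bigoplus_{j=1}^{d_1}u_{1j}K[Z_{1j}]\oplus m_1K[x_1,\dots,x_n]$; the tail $m_1K[x_1,\dots,x_n] = n_2K[x_1,\dots,x_n]$ contains both the first generator piece $x_1n_2K[x_1,\dots,x_n]$ of $I$ and the ``rest'', and that ``rest'' is $n_2K[x_2,\dots,x_n]$, which we decompose using block $i=2$, and so on inductively. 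Carrying out this telescoping — at each stage splitting $n_iK[x_i,\dots,x_n]$ into its $i$-th block plus $x_in_iK[x_i,\dots,x_n]$ (a generator piece of $I$, absorbed into the Lemma \ref{l7} description) plus $n_ix_iK[x_{i+1},\dots,x_n]$... \emph{wait, I must be careful}: the correct split is $n_iK[x_i,\dots,x_n] = \big(\bigoplus_j u_{ij}n_iK[Z_{ij}]\big)\oplus n_im_iK[x_i,\dots,x_n]$, and $n_im_i=n_{i+1}$, while the generator piece $x_in_iK[x_i,\dots,x_n]$ of $I$ sits inside $n_iK[x_i,\dots,x_n]$ and must be matched against one of the $u_{ij}n_iK[Z_{ij}]$ summands (namely the ones with $x_i\in Z_{ij}$) together with part of the tail. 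The clean bookkeeping is: intersect everything with $I$ and with its complement. Concretely, I would show that $\bigoplus_{i,j}u_{ij}n_iK[Z_{ij}]$ is precisely a complement in $S$ of the subspace $I$ described in Lemma \ref{l7}, by checking that every monomial $x^a\notin I$ lies in exactly one $u_{ij}n_iK[Z_{ij}]$: given $x^a\notin I$, find the largest $i$ with $n_i \mid x^a$ (this $i$ exists and is $\le t$ because $x^a\notin I$ forbids $n_{t+1}=n_tm_t$ and the generator monomials from dividing $x^a$); then $x^a/n_i$ is a monomial not divisible by $m_i$ and not divisible by $x_i\cdot(\text{appropriate factor})$, which forces it into a unique $u_{ij}K[Z_{ij}]$ by the single-variable peeling decomposition of $K[x_i,\dots,x_n]$ modulo $(m_i)$ — and the non-divisibility by the $I$-generators pins down that $x_i$-exponent bound. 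Uniqueness across different $i$ follows because the $n_i$ are strictly increasing divisibility-wise and membership in $u_{ij}n_iK[Z_{ij}]$ records $n_i$ as the maximal such prefix.

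Once $\Dc$ is known to be a Stanley decomposition, $\sdepth(\Dc)=\min_{i,j}|Z_{ij}| = \min_{i}(n-i) = n-t$, because $|Z_{ij}| = |\{x_i,\dots,x_n\}| - 1 = (n-i+1)-1 = n-i$ for every $j$, and $i$ ranges over $1,\dots,t$. Since $t=|G(I)|$ by \cite[Corollary 2.3]{HB}, we get $\sdepth(\Dc)=n-|G(I)|$. On the other hand Theorem \ref{t1} gives $\sdepth(S/I)=n-|G(I)|$, and of course $\sdepth(\Dc)\le\sdepth(S/I)$ for any Stanley decomposition; hence equality $\sdepth(\Dc)=\sdepth(S/I)$ holds. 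I expect the main obstacle to be the careful verification that the union of the blocks is \emph{direct} and \emph{exhausts} $S\setminus I$ — i.e., the existence-and-uniqueness argument for the monomial $x^a\notin I$ — since this is where the interaction between the ``peeling'' decompositions of the individual $S_i$ and the ladder structure of the generators of $I$ (encoded in Lemma \ref{l7}) has to be reconciled without overlap or omission; everything after that is a one-line count of $|Z_{ij}|$ plus an appeal to Theorem \ref{t1}.
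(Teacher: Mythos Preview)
Your ingredients are exactly those of the paper --- the peeling identity $S_i=\bigoplus_{j=1}^{d_i}u_{ij}K[Z_{ij}]\oplus m_iS_i$ and Lemma~\ref{l7} --- and your endgame ($|Z_{ij}|=n-i$, hence $\sdepth(\Dc)=n-t=n-|G(I)|$, then appeal to Theorem~\ref{t1}) is identical to the paper's. The difference is in how the blocks are assembled. The paper actually carries through the telescoping you start and then abandon: after $S_i=\bigoplus_j u_{ij}K[Z_{ij}]\oplus m_iS_i$, it splits the tail via $S_i=S_{i+1}\oplus x_iS_i$ to get $S_i=\bigoplus_j u_{ij}K[Z_{ij}]\oplus m_iS_{i+1}\oplus x_im_iS_i$, then multiplies by $n_i$ and recurses on $n_{i+1}S_{i+1}=n_im_iS_{i+1}$. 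Iterating yields
\[
S=\bigoplus_{i=1}^{t}\bigoplus_{j=1}^{d_i}u_{ij}n_iK[Z_{ij}]\;\oplus\;\bigoplus_{j=1}^{t-1}x_jn_{j+1}S_j\;\oplus\;n_tm_tS_t,
\]
and the last two summands are exactly $I$ by Lemma~\ref{l7}. This is the bookkeeping you stumbled on at ``wait, I must be careful'': the point is that one splits off $x_iS_i$ \emph{after} multiplying by $m_i$, so the piece absorbed into $I$ is $x_im_in_iS_i=x_in_{i+1}S_i$, matching Lemma~\ref{l7} on the nose.

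Your fallback direct-verification route is a legitimate alternative, but it has a gap you should close: once you pick the maximal $i$ with $n_i\mid x^a$, you need $x^a/n_i\in S_i=K[x_i,\dots,x_n]$ before you can place it via the peeling decomposition of $S_i/(m_i)$, and you never check this. It does hold: if $x_\ell\mid x^a/n_i$ with $\ell<i$, then $x_\ell n_{\ell+1}\mid x_\ell n_i\mid x^a$, so the $\ell$-th generator of $I$ divides $x^a$, contradicting $x^a\notin I$. With that one line added, your monomial-by-monomial argument works; the paper's recursive construction is simply cleaner because the directness and disjointness from $I$ come for free rather than having to be verified by hand.
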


\begin{proof}
We decompose $S_i$ for the variable $x_{v(w_{i1})} = u_{i2}$,
$$S_i = K[Z_{i1}]\oplus x_{v(w_{i1})}S_i. $$
Again for $x_{v(w_{i2})}$ we decompose $S_i$ in the above equation,
$$S_i =K[Z_{i1}]\oplus x_{v(w_{i1})}K[Z_{i2}]\oplus x_{v(w_{i1})}x_{v(w_{i2})}S_{i}$$
$$=u_{i1}K[Z_{i1}] \oplus u_{i2}K[Z_{i2}] \oplus u_{i3}S_{i}.$$
We know that $m_i = \prod \limits_{j=1}^{d_i} x _{v(w_{ij})}$, so continuing in this way we obtain
\begin{equation}\label{eq1}
S_i = \bigoplus\limits_{j=1}^{d_i} u_{ij} k[Z_{ij}] \oplus m_i S_i.
\end{equation}
As $S_i=S_{i+1}\oplus x_{i}S_{i}$ for  $1 \leq i \leq t-1,$ it follows from   (\ref{eq1}) that
\begin{equation}\label{eq2}
S_i= \bigoplus\limits_{j=1}^{d_i} u_{ij} k[Z_{ij}] \oplus m_i S_{i+1} \oplus x_{i}m_{i}S_{i}.
\end{equation}
By using the above recursive relation for $1 \leq i \leq t-1$ we get
\begin{equation}\label{eq3}
S_1 = \bigoplus\limits_{i=1}^{t-1}\bigoplus\limits_{j=1}^{d_i} u_{ij} n_i K[Z_{ij}]\oplus \bigoplus\limits_{j=1}^{t-1}x_j n_{j+1} S_j \oplus n_{t}S_{t}.
\end{equation}
Now for $S_t$ we substitute the decomposition obtained in (\ref{eq1})
$$S_1 = \bigoplus\limits_{i=1}^{t-1}\bigoplus\limits_{j=1}^{d_i} u_{ij} n_i K[Z_{ij}]\oplus \bigoplus\limits_{j=1}^{t-1}x_j n_{j+1} S_j \oplus n_{t}\left( \bigoplus\limits_{j=1}^{d_t}u_{tj}K[Z_{tj}]\oplus m_t S_t\right)$$
and obtain
\begin{equation}\label{02}
S_1 = \bigoplus\limits_{i=1}^{t}\bigoplus\limits_{j=1}^{d_i} u_{ij} n_i K[Z_{ij}]\oplus \bigoplus\limits_{j=1}^{t-1}x_j n_{j+1} S_j \oplus n_{t}m_{t} S_t.
\end{equation}

Since $I$ is a critical monomial ideal, Lemma \ref{l7} implies that
$I=\bigoplus\limits_{j=1}^{t-1}x_j n_{j+1}
S_j\oplus
n_t m_t S_t$ and that $S_1=S$.  Thus  (\ref{02}) yields
\begin{equation}\label{03}
S/I= \bigoplus\limits_{i=1}^{t}\bigoplus\limits_{j=1}^{d_i} u_{ij} n_i K[Z_{ij}],
\end{equation}
where  for each $i$ and $j$ we have $|Z_{ij}|=n-i\geq n-t$. This implies  $|Z_{ij}|\geq
n-|G(I)|$ as $t=|G(I)|$. Hence the desired conclusion follows from Theorem \ref{t1}.
\end{proof}
\indent  Recall that
a numerical function $H\:\NN\rightarrow \NN$ is the  Hilbert function of $S/I$
for some graded ideal  $I\subset
S=\polykxn$,  if and only if $H(0)=1,$ $H(1)\leq n$ and $H(d+1)\leq H(d)^{<d>}$ for all
   $d\geq 1$ (See \cite[Theorem 4.2.10]{BH}).\\

\indent If $I$ is a non critical monomial ideal we prove the
following.
\begin{Proposition}\label{t3}
Let $K$ be an infinite field. Then for any non critical monomial ideal $I\subset S=\polykxn$ there exists a Stanley  ideal $L\subset
S$ such that $S/I$ and $S/L$ have the same depths and  the same Hilbert
function.
\end{Proposition}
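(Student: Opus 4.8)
The plan is to take $L := \Gin(I)$, the generic initial ideal of $I$ for the degree reverse lexicographic order associated to $x_1 > x_2 > \dots > x_n$. The hypothesis that $K$ is infinite enters here in an essential way: it is what makes $\Gin(I)$ well defined, i.e.\ there is a nonempty Zariski-open set $U \subseteq \GL_n(K)$ on which $g \mapsto \ini(gI)$ is constant, and its common value $L$ is a monomial ideal fixed by the group of upper triangular matrices -- so $L$ is strongly stable when $\chara K = 0$, and Borel-fixed in general. Two of the three required properties of $L$ are then classical. First, since $L$ is an initial ideal of a linear change of coordinates of $I$, one has $H_{S/L} = H_{S/I}$. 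Second, $\depth(S/L) = \depth(S/I)$: the reverse lexicographic generic initial ideal preserves the projective dimension of the quotient ring, equivalently (by Auslander--Buchsbaum) its depth; here the inequality $\depth(S/L) \le \depth(S/I)$ is the general fact that depth does not increase under degeneration to an initial ideal, while the reverse inequality is a theorem of Bayer--Stillman type, which reduces -- using the good behaviour of the revlex gin under generic hyperplane sections -- to the Artinian case.

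It remains to show that $L$ is a Stanley ideal, i.e.\ $\sdepth(S/L) \ge \depth(S/L)$. Put $c := \max\{\, i : x_i \mid u \text{ for some } u \in G(L)\,\}$ and $T := K[x_1, \dots, x_c]$. No minimal generator of $L$ involves $x_{c+1}, \dots, x_n$, so $L = L_0 S$ with $L_0 = L \cap T$, and hence $S/L \cong (T/L_0)[x_{c+1}, \dots, x_n]$; in particular $x_{c+1}, \dots, x_n$ is a regular sequence on $S/L$ and $\depth(S/L) = \depth(T/L_0) + (n-c)$. Now $\depth(T/L_0) = 0$: choosing $u \in G(L_0)$ divisible by $x_c$, strong stability of $L_0$ (respectively the analogous property of Borel-fixed ideals when $\chara K > 0$) forces $x_i(u/x_c) \in L_0$ for every $i < c$, while $u/x_c \notin L_0$, so the residue of $u/x_c$ is a nonzero element of $T/L_0$ killed by $(x_1, \dots, x_c)$. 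Thus $\depth(S/L) = n-c$. On the other hand, adjoining the free variables $x_{c+1}, \dots, x_n$ to any Stanley decomposition of $T/L_0$ yields a Stanley decomposition of $S/L$ all of whose subsets have at least $n-c$ elements, so $\sdepth(S/L) \ge \sdepth(T/L_0) + (n-c) \ge n-c$, using only $\sdepth(T/L_0) \ge 0$; this monotonicity under adjoining a regular variable is also a special case of Rauf's results in \cite{AR}. Combining, $\sdepth(S/L) \ge n-c = \depth(S/L) = \depth(S/I)$, so $L = \Gin(I)$ is a Stanley ideal with the same depth and Hilbert function as $I$.

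I would end with two remarks. The hypothesis that $I$ be non-critical is not needed for this argument; it only serves to exclude the case already covered by Lemma \ref{p1}, and indeed $L$ is critical precisely when $I$ is, since the lex ideal is determined by the Hilbert function. The step demanding the most care is the depth-preservation of the reverse lexicographic gin; by contrast, once $L$ is known to be Borel-fixed, the Stanley-ideal property is almost free, because $\depth(S/L)$ is just the number of variables that split off as a free polynomial extension. A secondary issue is the case $\chara K > 0$, where ``Borel-fixed'' is strictly weaker than ``strongly stable'' -- but the identity $\depth(S/L) = n-c$ and the socle computation above remain valid (for instance via the Eliahou--Kervaire-type resolution of $L$).
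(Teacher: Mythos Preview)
Your argument is correct and takes a genuinely different route from the paper's. The paper does not touch the generic initial ideal: it sets $b=\depth(S/I)$, passes modulo a regular sequence of $b$ generic linear forms to an ideal $J\subset S'=K[x_1,\dots,x_{n-b}]$ with $H_{S/JS}=H_{S/I}$, and takes $L=J^{\lex}S$. The non-critical hypothesis is then used essentially: since $I^{\lex}$ is not universal lexsegment, one shows by contradiction that $J^{\lex}\subset S'$ cannot be universal lexsegment either, whence $\depth(S'/J^{\lex})=0$ by \cite[Corollaries~1.3,~1.4]{HA}, and the Stanley inequality follows from \cite[Lemma~3.6]{HM} exactly as in your final step. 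Your route via $L=\Gin(I)$ replaces this Hilbert-function combinatorics with the Bayer--Stillman depth-preservation theorem; what you gain is uniformity (the non-critical hypothesis is never invoked, so the statement holds for arbitrary graded ideals), while the paper's construction stays inside the lex/universal-lex framework that governs the rest of the article and avoids appealing to the comparatively deeper Bayer--Stillman result.

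One small correction to your closing remark: in positive characteristic the specific socle computation does \emph{not} survive verbatim (for $L_0=(x_1^p,x_2^p)$ in characteristic $p$ the element $x_2^{\,p-1}=u/x_c$ is not annihilated by $x_1$), and the Eliahou--Kervaire resolution applies only to stable ideals, which Borel-fixed ideals in characteristic $p$ need not be. The conclusion $\depth(T/L_0)=0$ is nevertheless correct: for any Borel-fixed $L_0$ one has $(L_0:x_c^{\infty})=(L_0:\mm_T^{\infty})$ (this is part of the Bayer--Stillman package and is characteristic-free), and since some minimal generator of $L_0$ is divisible by $x_c$ the ideal $L_0$ is not $\mm_T$-saturated, so $\mm_T\in\Ass(T/L_0)$.
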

\begin{proof} Let $I \subset S$ be a non critical monomial ideal and
$I^{\lex}\subset S$ is the corresponding lexsegment ideal then by
\cite[Corollary 1.3]{HA} we have $|G(I^{\lex})|> n$. If
$\depth(S/I)=b$, then there exists a regular sequence
$(\theta_1,\theta_2,\ldots, \theta_b)$ of $S/I$ with each
$\deg(\theta_i)=1$. It then follows that there exists a
homogeneous ideal $J$ of $S'=K[x_1,\ldots, x_{n-b}]$ such that
the ideal $JS$ of $S$ satisfies $H_{S/JS}=H_{S/I}$.

We now claim
that the lexsegment ideal $J^{\lex}\subset S'$ of $J$ cannot be
universal lexsegment. In fact, if $J^{\lex}$ is universal
lexsegment, then $J^{\lex}$ remains being lexsegment in the
polynomial ring $K[x_1,\ldots, x_m]$ for each $m\geq n-b$. In
particular, the ideal $J^{\lex}S$ of $S$ is universal lexsegment.
Since $H_{S/JS}= H_{S/J^{\lex}S}=H_{S/I}$, it follows that
$I^{\lex}=J^{\lex}S$, because we have unique lexsegment ideal
corresponding to every ideal $I \subset S$. Thus $I$ has a universal
lexsegment ideal $I^{\lex},$ which is a contradiction.

Since the
lexsegment ideal $J^{\lex}$ of $J$ cannot be universal lexsegment,
it follows from \cite[Corollary 1.3,1.4]{HA} that
$\depth(S'/J^{\lex})=0$. Thus $\depth(S/J^{\lex}S)=b \leq
\sdepth_{S'} (S'/J^{\lex}) + b = \sdepth_{S}(S/J^{\lex}S), $
by (\cite[Lemma 3.6]{HM}).
\end{proof}

\end{document}